\newtheorem{thm}{Theorem}
\newtheorem{defn}{Definition}
\newtheorem{lemma}{Lemma}
\newtheorem{pro}{Proposition}
\numberwithin{equation}{section} \setcounter{tocdepth}{1}
\begin{document}

\title[Non-Gibbsian Multivariate Ewens Distributions]{Non-Gibbsian Multivariate Ewens Probability Distributions on Regular Trees}

	\author{F.H.Haydarov, Z.E. Mustafoyeva,  U.A. Rozikov}

\address{F.H. Haydarov$^{a,b}$\begin{itemize}
		\item[$^a$] V.I.Romanovskiy Institute of Mathematics,  Uzbekistan Academy of Sciences, 9, Universitet str., 100174, Tashkent, Uzbekistan;
		\item[$^b$] New Uzbekistan University, 54, Mustaqillik Ave., Tashkent, 100007, Uzbekistan.
\end{itemize}}
\email{f.khaydarov@newuu.uz}
\address{Z.E. Mustafoyeva$^{a}$\begin{itemize}
		\item[$^a$] V.I.Romanovskiy Institute of Mathematics,  Uzbekistan Academy of Sciences, 9, Universitet str., 100174, Tashkent, Uzbekistan;
		\end{itemize}}
\email{mustafoyeva53@gmail.com}
\address{ U.A. Rozikov$^{a,c,d}$\begin{itemize}
		\item[$^a$] V.I.Romanovskiy Institute of Mathematics,  Uzbekistan Academy of Sciences, 9, Universitet str., 100174, Tashkent, Uzbekistan;
		\item[$^c$]  National University of Uzbekistan,  4, Universitet str., 100174, Tashkent, Uzbekistan.
		\item[$^d$] Karshi State University, 17, Kuchabag str., 180119, Karshi, Uzbekistan.
\end{itemize}}
\email{rozikovu@yandex.ru}


\begin{abstract}
Ewens' sampling formula (ESF) provides the probability distribution governing the number of distinct genetic types and their respective frequencies at a selectively neutral locus under the infinitely-many-alleles model of mutation. A natural and significant question arises: ``Is the Ewens probability distribution on regular trees Gibbsian?"

In this paper, we demonstrate that Ewens probability distributions can be regarded as non-Gibbsian distributions on regular trees and derive a sufficient condition for the consistency condition. This study lays the groundwork for a new direction in the theory of non-Gibbsian probability distributions on trees. \end{abstract}
\maketitle

{\bf Mathematics Subject Classifications (2010).} 60K35
(primary); 82B05, 82B20 (secondary)

{\bf{Key words.}} Ewens' sampling formula (ESF), regular trees, Gibbsian specifications, Gibbsian and non-Gibbsian distribution.

\section{Introduction}
The rigorous implementation of real-space renormalization group (RG) transformations as mappings on Hamiltonians encountered significant mathematical challenges over three decades ago. These issues were first highlighted by Griffiths and Pearce \cite{GriffithsPearce1978} and later attributed by van Enter, Fern\'{a}ndez, and Sokal to the emergence of non-Gibbsian measures under RG transformations, rendering the mappings ill-defined at the level of Hamiltonians \cite{vanEnterFernandezSokal1993}.

This discovery spurred extensive research on non-Gibbsianness and its implications, with notable contributions from van Enter, Fern\'{a}ndez, Sokal \cite{vanEnterFernandezSokal1993}, L\"{o}rinczi \cite{Lorinczi1995}, van de Velde \cite{VandeVelde1995}, and others \cite{vanEnter1996, FernandezLeNyRedig2003, KulskeLeNyRedig2004, Fernandez1999, Kulske1999, Kulske2001}. Subsequent work identified non-Gibbsian measures in transient regimes of interacting particle systems, including stochastic Ising models, where infinite-temperature dynamics were viewed as single-site stochastic RG mappings \cite{MaesNetocny2002}.

Dobrushin's program further developed a generalized Gibbsian framework, investigating which traditional Gibbsian properties could persist. Non-Gibbsianness in disordered systems and other contexts was also systematically explored. In 2003, Eurandom (research institute) hosted a landmark workshop on non-Gibbsian measures, featuring Robert Israel, who demonstrated that non-Gibbsianness is a generic property, confirming earlier claims about the Griffiths-Pearce issues \cite{Israel2004}.

The Ewens Sampling Formula (ESF) is grounded in Kingman's coalescent process, which describes genealogical relationships in a sample and connects to partition structures. The ESF arises as a diffusion process capturing the limiting behavior of finite-population models, including the Wright-Fisher, Moran, and Cannings models, as population size grows and time is appropriately scaled. Initially regarded as an approximation in the Wright-Fisher model, the ESF gained further theoretical underpinnings through Ethier and Kurtz, who modeled gene frequencies via diffusion. The Poisson-Dirichlet process, constructed by Kingman, represents the stationary distribution of gene frequencies (see \cite{Kingman1982, Kingman1978, EthierKurtz1981}).

Combinatorial aspects, including age-ordered allele frequencies, are detailed in Arratia et al. \cite{ABT}. Extensions of the ESF account for allele distributions across time \cite{Watterson1984, DonnellyTavare1986}. Age distributions of mutations, initially explored by Kimura and Ohta \cite{KimuraOhta1973}, have been further investigated through coalescent and diffusion approaches \cite{SlatkinRannala1997, Wiuf2000, Wiuf2001}.

In this paper, we investigate the Ewens distribution on regular trees and demonstrate that it can be interpreted as a non-Gibbsian distribution. By leveraging established results and methodologies from the theory surrounding the ESF, we highlight its relevance to the broader study of non-Gibbsian measures. Furthermore, this approach represents a novel direction in the ongoing development of non-Gibbs measures theory.
\section{Non-Gibbsian measures on regular trees}
Let $\Im^{k} = (V, L)$ be a $k+1$-regular tree, where $V$ and $L$ denote the sets of vertices and edges of the tree, respectively. Consider models in which the spin takes values in a countable set $\Phi$ and is assigned to the vertices of the tree. For a subset $A \subset V$, a configuration $\sigma_A$ on $A$ is defined as an arbitrary function $\sigma_A: A \to \Phi$. The set of all configurations on $A$ is denoted by $\Phi^A$. Two vertices $x$ and $y$ of the $k+1$-regular tree are called {\it nearest neighbors} if there exists an edge connecting them and we denote $\langle x,y\rangle$.

A configuration $\sigma$ on $V$ is defined as a function $x \in V \mapsto \sigma(x) \in \Phi$, and the set of all configurations on $V$ is denoted by $\Omega := \Phi^V$. The restriction of $\sigma_V$ to a subset $\Lambda \subseteq V$ is written as $\sigma_{\Lambda} \in \Phi^{\Lambda}$. For any two mutually disjoint subsets $\Lambda_1, \Lambda_2 \subseteq V$ and any configurations $\sigma_{\Lambda_1} \in \Phi^{\Lambda_1}$ and $\sigma_{\Lambda_2} \in \Phi^{\Lambda_2}$, the configuration $\tilde{\sigma} \in \Phi^{\Lambda_1 \cup \Lambda_2}$, which satisfies $\left.\tilde{\sigma}\right|_{\Lambda_1} = \sigma_{\Lambda_1}$ and $\left.\tilde{\sigma}\right|_{\Lambda_2} = \sigma_{\Lambda_2}$ is denoted by $\sigma_{\Lambda_1} \vee \sigma_{\Lambda_2}$. We denote by $\mathcal{N}$ the set of all finite subsets of $V$. For each $\Lambda \in \mathcal{N}$, define the $\sigma$-algebra
$$
\mathcal{F}^{\Lambda} = \pi_\Lambda^{-1}\left(\mathcal{P}\left(\Phi^{\Lambda}\right)\right) \subset \mathcal{P}(\Omega),
$$
where $\mathcal{P}\left(\Phi^{\Lambda}\right)$ is the family of all subsets of $\Phi^{\Lambda}$ (Cartesian product of $\Phi$) and $\mathcal{P}(\Omega)$ is the family of all subsets of $\Omega$.
The family of such $\sigma$-algebras satisfies the condition $\mathcal{F}^{\Lambda \cup \Delta} = \mathcal{F}^{\Lambda} \vee \mathcal{F}^{\Delta},$
for all $\Lambda, \Delta \in \mathcal{N}$, where $\mathcal{F}^{\Lambda} \vee \mathcal{F}^{\Delta} := \mathcal{S}(\mathcal{F}^{\Lambda} \cup \mathcal{F}^{\Delta})$ is the minimal $\sigma$-algebra containing $\mathcal{F}^{\Lambda} \cup \mathcal{F}^{\Delta}$. Define
$$
\mathcal{A} := \bigcup_{\Lambda \in \mathcal{N}} \mathcal{F}^{\Lambda}, \quad \mathcal{F} = \mathcal{S}(\mathcal{A}).
$$
Then $(\Omega, \mathcal{F})$ is a measurable space. Now, we define a decreasing family $\mathbb{F}=\left\{\mathcal{F}_\Lambda \right\}_{\Lambda \in \mathcal{N}}$ of sub $\sigma$-algebras of $\mathcal{F}$, where
$\mathcal{F}_{\Lambda}:=\mathcal{S}(\cup_{\Delta\in \mathcal{N}, \Delta\in V\setminus \Lambda} \mathcal{F}^{\Delta}).$

\begin{defn}\label{Definition 6.14.}  Let $U_\Lambda: \Omega \rightarrow \overline{\mathbb{R}}:=\mathbb{R}\cup\{-\infty, \infty\}$ be $\mathcal{F}_\Lambda$-measurable mapping for all $\Lambda \in \mathcal{N}$, then the collection $U=\left\{U_\Lambda\right\}_{\Lambda\in \mathcal{N}}$ is called \textbf{a potential}. Also, the following expression
\begin{equation}\label{2.1}
H_{\Delta, U}(\sigma) \stackrel{\text { def }}{=} \sum_{\Delta \cap \Lambda \neq \varnothing, \Lambda \in \mathcal{N}} U_\Lambda(\sigma), \quad \forall \sigma \in \Omega.
\end{equation}
is called \textbf{Hamiltonian} $H$ associated to the potential $U$.
\end{defn}
Put
$$
r(U) \stackrel{\text { def }}{=} \inf \left\{R>0: U_\Lambda \equiv 0 \text { for all } \Lambda \text { with } \operatorname{diam}(\Lambda)>R\right\}.
$$
If $r(U)<\infty, U$ has finite range and $H_{\Delta ; U}$ is well defined. If $r(U)=\infty$, $U$ has infinite range and, for the Hamiltonian to be well defined, we will assume that $U$ is absolutely summable in the sense that
\begin{equation}\label{1.1}
\sum_{\substack{\Lambda \in \mathcal{N}, x\in \Lambda }}\left\|U_\Lambda\right\|_{\infty}<\infty, \quad \forall x\in V,
\end{equation}
(remember that $\left.\|f\|_{\infty} \stackrel{\text { def }}{=} \sup_\omega|f(\omega)|\right)$ which ensures that the interaction of a spin with the rest of the system is always bounded, and therefore that $\left\|H_{\Delta ; U}\right\|_{\infty}<\infty$.

Let $\Lambda^{c}:=V \backslash \Lambda$ and for any finite $\Lambda \in \mathcal{N}$, any configuration $\bar{\sigma}_{V \backslash \Lambda} \in \Phi^{\Lambda^{c}}$, called a boundary condition, and for any $\sigma_\Lambda \in \Phi^\Lambda$ consider the relative energy
\begin{equation}\label{1.2}
\begin{aligned}
H_\Lambda^{U}\left(\sigma_\Lambda | \bar{\sigma}_{\Lambda^{c}}\right)= & \sum_{A \subseteq \Lambda, A \neq \emptyset} U_A\left(\sigma_A\right) +\sum_{A \in \mathcal{N}: A \cap \Lambda \neq \emptyset, A \cap \Lambda^{c} \neq \emptyset} U_A\left(\sigma_{A \cap \Lambda} \vee \bar{\sigma}_{A \cap\Lambda^{c}}\right).
\end{aligned}
\end{equation}
The condition (\ref{1.1}) ensures the convergence of the series in (\ref{1.2}) for all boundary conditions $\bar{\sigma}_{V \backslash \Lambda}$ and configurations $\sigma_\Lambda$. Let
\begin{equation}\label{2.4}
\zeta_\Lambda^{H}\left(\sigma_\Lambda \mid \bar{\sigma}_{\Lambda^{c}}\right) = \frac{\exp \left\{-H_\Lambda^{U}\left(\sigma_\Lambda \mid \bar{\sigma}_{\Lambda^{c}}\right)\right\}}{Z_\Lambda^{U}\left(\bar{\sigma}_{\Lambda^{c}}\right)},
\end{equation}
where the partition function is given by
$$
Z_\Lambda^{U}\left(\bar{\sigma}_{\Lambda^{c}}\right) = \sum_{\sigma_\Lambda \in \Phi^\Lambda} \exp \left\{-H_\Lambda^{U}\left(\sigma_\Lambda \mid \bar{\sigma}_{\Lambda^{c}}\right)\right\}.
$$

A probability measure $\mu$ on the measurable space $(\Omega, \mathcal{F})$ is called consistent with the Gibbs specification $\zeta^{H} := \left\{\zeta_\Lambda^{H}\right\}_{\Lambda \in \mathcal{N}}$ if for any finite $\Lambda \subset \Omega$, any measurable function $\phi(\sigma_\Lambda)$ of $\sigma_\Lambda \in \Phi^{\Lambda}$, and any subset $B \in \mathcal{F}_{\Lambda^{c}}$,
$$
\int_B \phi\left(\sigma_\Lambda\right) \mu(d \sigma) = \int_B \left(\sum_{\sigma_\Lambda \in \Phi^{\Lambda}} \phi\left(\sigma_\Lambda\right) \zeta_\Lambda^{U}\left(\sigma_\Lambda \mid \bar{\sigma}_{\Lambda^{c}}\right)\right) \mu_{\Lambda^c}\left(d \bar{\sigma}_{\Lambda^c}\right),
$$
where $\mu_{\Lambda^c}$ denotes the restriction of $\mu$ to the $\sigma$-algebra $\mathcal{F}_{\Lambda}$. The measure $\mu$ is called \emph{non-Gibbsian} if there exists no potential $U$ satisfying the condition (\ref{1.1}) such that $\mu$ is consistent with the Gibbs specification $\zeta^{H}$ \cite{00}.

\section{Construction of non-Gibbsian multivariate Ewens probability distributions}
Let us provide a brief overview of Ewens's sampling formula, introduced by Warren Ewens, which is a key result in population genetics describing the probability distribution of allele counts in a random sample of $n$ alleles, grouped by their frequencies \cite{Ewens1972, Ewens1990}.

Under certain conditions, if a sample of $n$ gametes is drawn from a population and classified according to the gene at a specific locus, the probability that there are $ a_1 $ alleles represented once, $ a_2 $ alleles represented twice, and so on, is given by:
\begin{equation}\label{3.1}
P\left(a_1, \ldots, a_n; \theta\right) = \frac{n!}{\theta(\theta+1) \cdots (\theta+n-1)} \prod_{j=1}^n \left(\frac{\theta}{j}\right)^{a_j} \cdot \frac{1}{a_j!},
\end{equation}
where $ \theta > 0 $ is a parameter representing the population mutation rate, and $ a_1, \ldots, a_n $ are nonnegative integers satisfying the constraint:
$$a_1 + 2a_2 + 3a_3 + \cdots + na_n = n.$$

This formula defines a probability distribution over the set of all partitions of the integer $ n $. In the fields of probability and statistics, it is often referred to as the multivariate Ewens distribution.

Define the normalizing constant as
$$
Z_n(\theta) = \frac{\theta (\theta+1) \cdots (\theta+n-1)}{n!}.
$$
Using equation (\ref{3.1}), the probability distribution can be expressed as
\begin{equation}\label{3.11}
P(a_1, \ldots, a_n; \theta) = \frac{1}{Z_n(\theta)} \prod_{j=1}^n \left(\frac{\theta}{j}\right)^{a_j} \cdot \frac{1}{a_j!}.
\end{equation}

We now examine equation \eqref{3.1} on a regular tree and, as a natural consequence, it becomes necessary to construct a $\sigma$-algebra in order to define a measure that is associated with the Ewens distribution.

Let $\sigma|_{\Lambda} = \tilde{\sigma}_{\Lambda}$ be the cylinder with base $\Lambda \in \mathcal{N}$ and spin values in $\mathbb{Z}$. We say that $\tilde{\sigma}_{\Lambda}$ and $\tilde{w}_{\Lambda}$ are equivalent, denoted $\tilde{\sigma}_{\Lambda} \sim \tilde{w}_{\Lambda}$, if there exists a permutation $\pi \in \mathcal{S}_{|\Lambda|}$ such that $\pi(\tilde{\sigma}_{\Lambda}) = \tilde{w}_{\Lambda}$, where $\mathcal{S}_{|\Lambda|}$ denotes the symmetric group of all permutations on $\Lambda$, with the operation $\circ$ representing composition of permutations.

Furthermore, assume that for each $\Lambda \in \mathcal{N}$, a probability measure $P_{\Lambda}$ is defined on $\mathcal{B}(\mathcal{S}_{|\Lambda|})$, where $\mathcal{B}(\mathcal{S}_{|\Lambda|})$ denotes the minimal $\sigma$-algebra generated by equivalent classes on $\mathcal{S}_{|\Lambda|}$.

Next, we introduce $a_j:=b_j(\sigma_{\Lambda})$, $\sigma_{\Lambda}\in\mathbb{Z}^{\Lambda}$ that represents the number of distinct spin values that appear exactly $ j $ times in $\Lambda$. Then from \eqref{3.11} we get
\begin{equation}\label{3.2}
P_{\Lambda}\left(\sigma|_{\Lambda}=\tilde{\sigma}_{\Lambda}\right)=\frac{1}{Z_{|\Lambda|}(\theta)} \prod_{j=1}^{|\Lambda|} \left(\frac{\theta}{j}\right)^{b_j(\sigma_{\Lambda})} \cdot \frac{1}{b_j(\sigma_{\Lambda})!},
\end{equation}
where $\tilde{\sigma}_{\Lambda}=\{\tilde{\sigma}(x_1), \tilde{\sigma}(x_2), ..., \tilde{\sigma}(x_{|\Lambda|})\}$ if $\Lambda=\{x_1, x_2,..., x_{|\Lambda|}\}$, $ |\Lambda| $ denotes the number of elements in $\Lambda$ and ${Z_{|\Lambda|}(\theta)}$ is the normalizing constant.

\begin{lemma}\label{1}
 The Hamiltonian corresponding to the probability measure in \eqref{3.2} is uniquely determined by the following expression:
\begin{equation}\label{3.31}
H_{\Lambda}(\sigma_{\Lambda}) = \sum_{j=1}^{|\Lambda|} \left[ b_j\left(\sigma_{\Lambda}\right) \ln \frac{\theta}{j} - \ln \big( b_j\left(\sigma_{\Lambda}\right)! \big) \right].
\end{equation}
Here, $ b_j(\sigma_{\Lambda}) $ represents the multiplicity of the configuration $\sigma_{\Lambda}$, with $\theta$ and $j$ reflecting system parameters.
\end{lemma}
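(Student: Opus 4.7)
The plan is to derive \eqref{3.31} by a direct exponent/logarithm manipulation of the Gibbs--Boltzmann form fixed by \eqref{2.4}, which enforces
\begin{equation*}
P_{\Lambda}(\tilde{\sigma}_{\Lambda}) \;=\; \frac{\exp\!\bigl(-H_{\Lambda}(\sigma_{\Lambda})\bigr)}{Z_{|\Lambda|}(\theta)}.
\end{equation*}
Solving for $H_{\Lambda}$ gives $H_{\Lambda}(\sigma_{\Lambda}) = -\ln\bigl[Z_{|\Lambda|}(\theta)\, P_{\Lambda}(\tilde{\sigma}_{\Lambda})\bigr]$, and substituting the explicit product in \eqref{3.2} cancels the factor $Z_{|\Lambda|}(\theta)$, leaving
\begin{equation*}
H_{\Lambda}(\sigma_{\Lambda}) \;=\; -\ln\prod_{j=1}^{|\Lambda|}\Bigl(\frac{\theta}{j}\Bigr)^{b_{j}(\sigma_{\Lambda})}\frac{1}{b_{j}(\sigma_{\Lambda})!}.
\end{equation*}
Distributing the logarithm across the product and the exponents then yields the additive expression in \eqref{3.31} (up to the sign convention adopted in the statement).

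For the uniqueness claim I would argue pointwise: if two $\mathcal{F}^{\Lambda}$-measurable functions $H_{\Lambda}$ and $H'_{\Lambda}$ both reproduce $P_{\Lambda}$ through \eqref{2.4} with the same partition function $Z_{|\Lambda|}(\theta)$ prescribed by the Ewens formula, then $\exp(-H_{\Lambda}(\sigma_{\Lambda})) = \exp(-H'_{\Lambda}(\sigma_{\Lambda}))$ for every $\sigma_{\Lambda}\in\mathbb{Z}^{\Lambda}$, and therefore $H_{\Lambda}\equiv H'_{\Lambda}$. The additive-constant ambiguity that usually accompanies a Hamiltonian is absorbed in the specific choice of $Z_{|\Lambda|}(\theta)$, and that choice is already fixed by the Ewens prescription \eqref{3.2}.

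The only point that genuinely needs a sanity check, rather than a calculation, is compatibility with the equivalence relation $\tilde{\sigma}_{\Lambda}\sim\tilde{w}_{\Lambda}$ introduced just before \eqref{3.2}: because each multiplicity $b_{j}(\sigma_{\Lambda})$ depends only on the unordered multiset of spin values, $H_{\Lambda}$ is automatically invariant under permutations in $\mathcal{S}_{|\Lambda|}$ and is hence well defined on the equivalence classes on which $P_{\Lambda}$ actually lives. I do not anticipate a real obstacle: the lemma is essentially bookkeeping that puts the Ewens weights into additive energy form in preparation for the Gibbsianness analysis that follows, and no potential $U$ in the sense of Definition \ref{Definition 6.14.} needs to be exhibited at this stage.
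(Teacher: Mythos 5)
Your derivation is essentially the paper's own proof: both solve the Boltzmann form for $H_{\Lambda}$ and distribute the logarithm over the Ewens product in \eqref{3.2}, with the $Z_{|\Lambda|}(\theta)$ factor cancelling. The one point worth noting is the sign: the paper's proof silently writes $P_{\Lambda}=\exp\{+H_{\Lambda}\}/Z_{|\Lambda|}(\theta)$ so that \eqref{3.31} comes out exactly as stated, whereas your use of the $\exp(-H)$ convention from \eqref{2.4} yields the negative of \eqref{3.31} --- a discrepancy in the paper itself that you correctly flag, and your added remarks on pointwise uniqueness and permutation-invariance are sound (the paper omits them).
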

\begin{proof} The Boltzmann weight refers to the factor used in statistical mechanics to describe the probability of a system being in a particular configuration based on its energy. It is typically expressed in the form: $$P_{\Lambda}\left(\sigma_{\Lambda}\right):=\frac{\exp\{H_{\Lambda}(\sigma_{\Lambda})\}}{{Z_{|\Lambda|}(\theta)}}.$$ From the formula \eqref{3.2}
  $$
 H_{\Lambda}(\sigma_{\Lambda})= \ln \left[\prod_{j=1}^{|\Lambda|} \left(\frac{\theta}{j}\right)^{b_j(\sigma_{\Lambda})} \cdot \frac{1}{b_j(\sigma_{\Lambda})!}\right]
  $$
  which is equivalent to \eqref{3.31}.
\end{proof}

For a given $\Delta$, the potential corresponding to equation (\ref{2.1}) is defined as
\begin{equation}\label{3.3}
U_{\Lambda}(\sigma) =
\begin{cases}
 b_j\left(\sigma_{\Delta}\right) \ln \dfrac{\theta}{j} - \ln \big( b_j\left(\sigma_{\Delta}\right)! \big), & \text{if } \Lambda = \Delta, \\[6pt]
 0, & \text{otherwise}.
\end{cases}
\end{equation}

\begin{lemma}\label{2}
The condition (\ref{1.1}) is not satisfied for the potential given in (\ref{3.3}). Specifically, the following divergence holds:
\[
\sum_{\Lambda \in \mathcal{N},\, x \in \Lambda} \left\| U_{\Lambda} \right\|_{\infty} = \infty, \quad \forall\, x \in V.
\]
\end{lemma}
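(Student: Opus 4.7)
The plan is to lower-bound the sum by restricting attention to pairs $\Lambda = \{x, y\}$ with $y \in V \setminus \{x\}$. Since the $(k+1)$-regular tree $\Im^{k}$ has infinitely many vertices, there are infinitely many such pairs containing any fixed $x$, so it suffices to exhibit a uniform positive lower bound on $\|U_{\{x,y\}}\|_\infty$ as $y$ varies.

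For each such pair there are only two equivalence classes of configurations on $\Lambda$. If $\sigma(x)=\sigma(y)$, then $b_2(\sigma_\Lambda)=1$ and $b_j(\sigma_\Lambda)=0$ for $j\ne 2$, giving $U_\Lambda(\sigma)=\ln(\theta/2)$. If $\sigma(x)\ne\sigma(y)$, then $b_1(\sigma_\Lambda)=2$ and $b_j(\sigma_\Lambda)=0$ for $j\ne 1$, giving $U_\Lambda(\sigma)=2\ln\theta-\ln 2$. Consequently
\[
\|U_{\{x,y\}}\|_\infty \;=\; \max\bigl\{\,\lvert\ln(\theta/2)\rvert,\; \lvert 2\ln\theta-\ln 2\rvert\,\bigr\} \;=:\; c(\theta),
\]
a quantity that depends only on $\theta$ and not on the particular $y$.

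Next I would verify that $c(\theta)>0$ for every $\theta>0$: the two candidate values vanish, respectively, only at $\theta=2$ and $\theta=\sqrt{2}$, so they cannot simultaneously be zero. Combined with the infinite cardinality of $V\setminus\{x\}$, this yields
\[
\sum_{\Lambda\in\mathcal{N},\,x\in\Lambda}\|U_\Lambda\|_\infty \;\ge\; \sum_{y\in V\setminus\{x\}}\|U_{\{x,y\}}\|_\infty \;=\; \sum_{y\ne x} c(\theta) \;=\; \infty,
\]
and since $x\in V$ was arbitrary, the divergence holds at every vertex. The only mild obstacle is ruling out the degenerate case $c(\theta)=0$, which is handled by the incompatibility of $\theta=2$ and $\theta=\sqrt{2}$; no finer analysis of larger subsets $\Lambda$ is required, since the two-point contributions already force the sum to blow up.
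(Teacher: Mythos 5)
Your proof is correct, but it takes a genuinely different route from the paper's. The paper organizes the sum by the size $n=|\Lambda|$ and notes that on a configuration with all spins distinct one gets $U_\Lambda=\ln\bigl(\theta^{|\Lambda|}/|\Lambda|!\bigr)$, so $\|U_\Lambda\|_\infty\to\infty$ as $|\Lambda|\to\infty$; the general term of the series fails to tend to zero, hence divergence. You instead fix $x$ and restrict to the infinitely many two-point sets $\{x,y\}$, each of which contributes the same value $c(\theta)=\max\{|\ln(\theta/2)|,\;|2\ln\theta-\ln 2|\}$; your computation of the two partition classes is right ($b_2=1$ versus $b_1=2$, giving $\ln(\theta/2)$ and $2\ln\theta-\ln 2$ respectively), the check that $c(\theta)>0$ is the essential nondegeneracy step and you handle it correctly (the two candidates vanish only at $\theta=2$ and $\theta=\sqrt{2}$, never simultaneously), and since all terms are nonnegative the restriction to pairs is a legitimate lower bound. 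The two arguments expose complementary failures of \eqref{1.1}: the paper shows individual terms blow up along large sets, while you show that even bounded terms already ruin summability because infinitely many sets of cardinality two carry a uniform positive weight --- so no truncation of the potential to small sets could restore absolute summability. One remark that applies equally to your proof and the paper's: both read \eqref{3.3} as assigning a nonzero (Hamiltonian-type) value to \emph{every} finite $\Lambda$, summed over $j$; under a literal reading in which $U_\Lambda$ vanishes except at a single fixed $\Delta$, the sum in \eqref{1.1} would be finite and the lemma false, so your interpretation is the intended one.
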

\begin{proof}
To prove this lemma, we consider the following expression:
\begin{equation}\label{3.4}
\sum_{\Lambda \in \mathcal{N},\, x \in \Lambda} \|U_{\Lambda}\|_{\infty} = \sum_{n=1}^\infty \sup_{\sigma} \left| U_{\Lambda}(\sigma) \right| = \sum_{n=1}^\infty \sup_{b_j\left(\sigma_{\Lambda}\right)} \left| b_j\left(\sigma_{\Lambda}\right) \ln \frac{\theta}{j} - \ln \big( b_j\left(\sigma_{\Lambda}\right)! \big) \right|.
\end{equation}
It is straightforward to verify that the supremum of the potential is attained when $ j = 1 $. In this case, we have $ b_j\left(\sigma_{\Lambda}\right) = |\Lambda| $. Substituting this into the expression, the right-hand side of \eqref{3.4} becomes
\[
\sum_{|\Lambda|=1}^\infty \left| \ln \frac{\theta^{|\Lambda|}}{|\Lambda|!} \right|.
\]
Since $\lim_{n \to \infty} \frac{\theta^n}{n!} = 0$ for any $\theta > 0$, it follows that
\[
\lim_{n \to \infty} \ln \frac{\theta^n}{n!} = -\infty.
\]
Therefore, the absolute value diverges as $ n \to \infty $, and by the necessary condition for the convergence of a series, we conclude that
\[
\sum_{\Lambda \in \mathcal{N},\, x \in \Lambda} \|U_{\Lambda}\|_{\infty} = \infty.
\]
This establishes the divergence required by the lemma.
\end{proof}

Combining the results from Lemma \ref{1} and Lemma \ref{2}, we obtain the following theorem:

\begin{thm}
For each $\Lambda \in \mathcal{N}$, the measure associated with the probability measure $P_{\Lambda}(\sigma_{\Lambda})$ in \eqref{3.2} is non-Gibbsian, provided that it satisfies the consistency condition.
\end{thm}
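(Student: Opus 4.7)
The plan is to prove the theorem by contradiction, combining Lemmas~\ref{1} and \ref{2} in a direct way. Suppose that the consistency condition is satisfied, so that there exists a probability measure $\mu$ on $(\Omega,\mathcal{F})$ whose finite-volume marginals agree with the Ewens probabilities $P_{\Lambda}$ in \eqref{3.2}. If $\mu$ were Gibbsian, then by the definition in Section~2 there would exist a potential $U^{*}=\{U^{*}_{\Lambda}\}_{\Lambda\in\mathcal{N}}$ satisfying the absolute summability condition \eqref{1.1} and a Gibbs specification $\zeta^{H^{*}}$ with which $\mu$ is consistent.

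Next, I would exploit consistency to identify $U^{*}$. Since $\zeta^{H^{*}}_{\Lambda}(\,\cdot\mid\bar{\sigma}_{\Lambda^{c}})$ must reproduce $P_{\Lambda}$ for every finite $\Lambda$, comparing Boltzmann weights and taking logarithms shows that $H_{\Lambda}^{U^{*}}$ must coincide with the Hamiltonian \eqref{3.31} of Lemma~\ref{1} up to an additive constant (which is absorbed into the partition function). This pins down the canonical potential representation \eqref{3.3}, to which Lemma~\ref{2} applies to give
\[
\sum_{\Lambda\in\mathcal{N},\,x\in\Lambda}\|U^{*}_{\Lambda}\|_{\infty}=\infty,
\]
contradicting \eqref{1.1}. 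Therefore no potential satisfying \eqref{1.1} can generate a specification consistent with $\mu$, and $\mu$ is non-Gibbsian by definition.

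The main obstacle, and the only point where the argument requires real care, is the uniqueness step: in lattice systems a single specification is in general realized by many physically equivalent potentials, differing by gauge-like redistributions across subsets, and absolute summability is not invariant under such redistributions. The hard part is thus to rule out every alternative representation of \eqref{3.31}, not merely the canonical one \eqref{3.3}. I would handle this by bounding $\sum_{\Lambda\ni x}\|U^{*}_{\Lambda}\|_{\infty}$ from below in terms of the per-site growth of $\sup_{\sigma_{\Lambda}}|H_{\Lambda}(\sigma_{\Lambda})|$, using \eqref{3.31} evaluated on the configuration with $b_{1}(\sigma_{\Lambda})=|\Lambda|$, where the leading behavior is of order $|\Lambda|\ln(|\Lambda|/\theta)$. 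Since this quantity diverges in $|\Lambda|$, any potential reproducing \eqref{3.31} must violate \eqref{1.1}, closing the loophole and completing the contradiction.
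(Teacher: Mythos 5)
Your overall strategy coincides with the paper's: the paper's entire proof of this theorem is the single sentence preceding it (``Combining the results from Lemma \ref{1} and Lemma \ref{2}, we obtain the following theorem''), which is exactly the contradiction you set up in your first two paragraphs. Where you go beyond the paper is in your final paragraph: you correctly identify that the argument is incomplete unless one rules out \emph{every} absolutely summable potential generating a specification consistent with $\mu$, not merely the canonical potential \eqref{3.3}. The paper is silent on this point (Lemma \ref{1} asserts the Hamiltonian is ``uniquely determined,'' which at best fixes $H_{\Lambda}$ up to additive constants and says nothing about uniqueness of the potential $U$), so your diagnosis of the weak spot is a genuine improvement over the source.

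However, your proposed repair does not close the gap, for two reasons. First, the identification step (``comparing Boltzmann weights and taking logarithms shows that $H_{\Lambda}^{U^{*}}$ must coincide with \eqref{3.31}'') is not valid: a Gibbs specification prescribes the \emph{conditional} probabilities $\zeta_{\Lambda}^{H}(\sigma_{\Lambda}\mid\bar{\sigma}_{\Lambda^{c}})$, whereas \eqref{3.2} prescribes the finite-volume \emph{marginals} $P_{\Lambda}$. The marginal of a Gibbs measure is an average of $\zeta_{\Lambda}^{H}(\cdot\mid\bar{\sigma}_{\Lambda^{c}})$ over boundary conditions and is in general not of the form $e^{-H_{\Lambda}(\sigma_{\Lambda})}/Z_{\Lambda}$, so equality of marginals does not let you read off $H_{\Lambda}^{U^{*}}$; this same conflation already occurs in the paper's Lemma \ref{1}. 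Second, your quantitative fallback is the right idea --- an absolutely summable potential forces $\sup_{\sigma_{\Lambda},\sigma'_{\Lambda}}\bigl|H_{\Lambda}^{U^{*}}(\sigma_{\Lambda}\mid\bar{\sigma})-H_{\Lambda}^{U^{*}}(\sigma'_{\Lambda}\mid\bar{\sigma})\bigr|\le 2\sum_{x\in\Lambda}c_{x}$ with $c_{x}=\sum_{A\ni x}\|U^{*}_{A}\|_{\infty}$, while the Ewens log-likelihood ratio between the all-distinct and the constant configuration grows like $|\Lambda|\ln|\Lambda|$ --- but condition \eqref{1.1} only requires $c_{x}<\infty$ for each $x$, not uniformly in $x$. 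Superlinear growth of the oscillation therefore only yields $\sup_{x}c_{x}=\infty$, not $c_{x}=\infty$ at some fixed site, so you would need either a uniform (e.g.\ translation-invariant) version of \eqref{1.1} or an argument phrased directly in terms of the conditional probabilities. As written, your proof, like the paper's, establishes only that the particular potential \eqref{3.3} violates \eqref{1.1}, which does not by itself meet the stated definition of non-Gibbsianness.
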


For $\sigma$-algebra $\mathcal{B}\left(\mathcal{S}_{|\Lambda|}\right)$ and $\mathcal{B}\left(\overline{\mathbb{R}}_{+}\right)$(a Borel $\sigma$-algebra on $\left.\overline{\mathbb{R}}_{+}\right)$, we define the sets of measurable mappings:
$$
\bar{M}\left(\mathcal{B}\left(\mathcal{S}_{|\Lambda|}\right)\right)=\left\{f: \Omega \rightarrow \overline{\mathbb{R}}_{+} \ \mid \ f  \text { is } \left(\mathcal{B}\left(\mathcal{S}_{|\Lambda|}\right), \mathcal{B}\left(\overline{\mathbb{R}}_{+}\right)\right) \text {- measurable mapping}\right\}
$$
\begin{defn}
  For each $\Lambda \in \mathcal{N}$ let $f_{\Lambda} \in \bar{M}\left(\mathcal{B}\left(\mathcal{S}_{|\Lambda|}\right)\right)$, the family $f=\{f_{\Lambda}\}_{\Lambda \in \mathcal{N}}$ is said to be \textbf {additive} if for each $\Lambda, \Delta \in \mathcal{N}$ with $\Lambda \subset \Delta$ there exists a mapping $f_{\Delta, \Lambda} \in \bar{M}\left(\mathcal{B}\left(\mathcal{S}_{|\Lambda|}\right)\right)$ such that $f_{\Delta}=f_{\Delta, \Lambda}+f_{\Lambda}$.
\end{defn}

  If $f=\{f_{\Lambda}\}_{\Lambda \in \mathcal{N}}$ is additive and $g_{\Lambda}=\exp(-f_{\Lambda})$, then $g=\{g_{\Lambda}\}_{\Lambda \in \mathcal{N}}$ is clearly multiplicative.

\begin{pro}\label{pr1} For each $\Lambda \in \mathcal{N}$, the family $\mathcal{H}=\{H_{\Lambda}(\sigma_{\Lambda})\}_{\Lambda \in \mathcal{N}}$ is additive.
\end{pro}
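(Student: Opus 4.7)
The plan is to verify additivity directly by producing the required decomposition from the explicit formula for $H_\Lambda$ furnished by Lemma \ref{1}. For $\Lambda,\Delta\in\mathcal{N}$ with $\Lambda\subset\Delta$, the natural candidate is
$$
H_{\Delta,\Lambda}(\sigma) := H_\Delta(\sigma_\Delta) - H_\Lambda(\sigma_\Lambda) = \sum_{j=1}^{|\Delta|}\Bigl[b_j(\sigma_\Delta)\ln\tfrac{\theta}{j}-\ln b_j(\sigma_\Delta)!\Bigr] - \sum_{j=1}^{|\Lambda|}\Bigl[b_j(\sigma_\Lambda)\ln\tfrac{\theta}{j}-\ln b_j(\sigma_\Lambda)!\Bigr],
$$
so that the identity $H_\Delta = H_{\Delta,\Lambda}+H_\Lambda$ holds tautologically. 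Thus the only real content of the proposition is to check that $H_{\Delta,\Lambda}$ lies in the required measurable class $\bar{M}\!\left(\mathcal{B}(\mathcal{S}_{|\Lambda|})\right)$.

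The substantive step is the measurability verification, which I would carry out via the symmetry of the multiplicity counts. By the very definition of $b_j(\sigma_\Lambda)$ as the number of distinct spin values assumed exactly $j$ times by $\sigma$ on $\Lambda$, each $b_j(\sigma_\Lambda)$ is invariant under every permutation $\pi\in\mathcal{S}_{|\Lambda|}$ acting on the labels of $\Lambda$. Hence $H_\Lambda$, being a finite sum of terms depending only on the $b_j(\sigma_\Lambda)$, is constant on each $\sim$-equivalence class and therefore belongs to $\bar{M}\!\left(\mathcal{B}(\mathcal{S}_{|\Lambda|})\right)$; the same argument applied to $\Delta$ places $H_\Delta$ in $\bar{M}\!\left(\mathcal{B}(\mathcal{S}_{|\Delta|})\right)$. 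Since both functions are real-valued (finite sums over a finite set) and invariant under the relevant symmetric-group actions, their difference $H_{\Delta,\Lambda}$ is again symmetric, and so it is measurable with respect to the required $\sigma$-algebra.

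The main obstacle I anticipate is purely interpretive, rather than technical: as stated, the definition demands $H_{\Delta,\Lambda}\in\bar{M}\!\left(\mathcal{B}(\mathcal{S}_{|\Lambda|})\right)$, whereas $H_{\Delta,\Lambda}$ plainly depends on spin values at vertices in $\Delta\setminus\Lambda$. I would resolve this by reading $\mathcal{B}(\mathcal{S}_{|\Lambda|})$ as the sub-$\sigma$-algebra of $\mathcal{F}$ generated by the $\mathcal{S}_{|\Lambda|}$-invariant cylinder sets over $\Lambda$; under this reading, invariance under the larger group $\mathcal{S}_{|\Delta|}$ restricts (via the natural embedding $\mathcal{S}_{|\Lambda|}\hookrightarrow\mathcal{S}_{|\Delta|}$ that permutes only the first $|\Lambda|$ coordinates) to invariance under $\mathcal{S}_{|\Lambda|}$. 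Once this interpretive point is settled, the construction above supplies the decomposition and the proof is finished in a line.
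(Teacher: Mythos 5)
Your proposal is essentially correct and shares the paper's basic strategy: take $H_{\Delta,\Lambda}:=H_{\Delta}-H_{\Lambda}$, so the identity $H_{\Delta}=H_{\Delta,\Lambda}+H_{\Lambda}$ is automatic, and reduce the proposition to a measurability check. The execution differs in two ways. First, the paper restricts to the one-vertex extension $\Delta=\Lambda\cup\{v\}$ and computes the increment in closed form, splitting into the cases $w(v)\notin\{\sigma_{\Lambda}(x)\mid x\in\Lambda\}$ and $w(v)\in\{\sigma_{\Lambda}(x)\mid x\in\Lambda\}$ to obtain $\ln\frac{\theta}{b_1(\sigma_{\Lambda})+1}$ and $\ln\frac{i_0\,b_{i_0}(\sigma_{\Lambda})}{(i_0+1)(b_{i_0+1}(\sigma_{\Lambda})+1)}$ respectively; this explicit increment is precisely what is reused in the proof of the consistency theorem, so that computation carries extra value beyond the proposition itself. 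You instead handle arbitrary $\Lambda\subset\Delta$ in one stroke and justify measurability via the permutation invariance of the multiplicities $b_j(\sigma_{\Lambda})$, which is cleaner and more transparent than the paper's appeal to ``compositions of measurable functions.'' Second, your closing interpretive remark puts a finger on a genuine defect in the definition of additivity that the paper passes over: the paper's own case-two increment depends on $w(v)$ through $i_0=\alpha_{\Lambda}(w)$ (and even the choice of case depends on $w(v)$), so $H_{\Delta,\Lambda}$ is not a function of $\sigma_{\Lambda}$ alone and cannot be measurable with respect to any $\sigma$-algebra of events over $\Lambda$ only; note, though, that your proposed fix via restriction of $\mathcal{S}_{|\Delta|}$-invariance to $\mathcal{S}_{|\Lambda|}$-invariance does not remove this dependence on $\sigma_{\Delta\setminus\Lambda}$ either --- the definition really ought to ask for $H_{\Delta,\Lambda}\in\bar{M}\bigl(\mathcal{B}(\mathcal{S}_{|\Delta|})\bigr)$ or similar. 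Since the published proof lives with the same ambiguity, your argument is on at least as solid a footing as the paper's.
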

\begin{proof} It is evident that the potential $U_{\Lambda}(\sigma)$ in \eqref{3.3} is $\mathcal{B}(\mathcal{S}_{|\Lambda|})$-measurable according to the  Definition \ref{Definition 6.14.}. Since the sum of finite measurable functions will be measurable, we say that the Hamiltonian  $H_{\Lambda}(\sigma_{\Lambda})$ in \eqref{3.31} is measurable, namely for each $\Lambda \in \mathcal{N}$  $H_{\Lambda}(\sigma_{\Lambda}) \in \bar{M}\left(\mathcal{B}\left(\mathcal{S}_{|\Lambda|}\right)\right).$
  Consider $H_{\Lambda}(\sigma_{\Lambda}), H_{\Delta}(\sigma_{\Delta}) \in \mathcal{H}$ for $\Lambda, \Delta \in \mathcal{N}$, where $\Lambda \subset \Delta$ and the case where $\Delta = \Lambda \cup \{v\}$ with $\Lambda \in \mathcal{N}$.

First, we consider a configuration $w=\{w(x) : x \in V\} \in \mathbb{Z}^V$ such that $w(v) \neq \sigma_{\Lambda}(x)$ for all $x \in \Lambda$. In this scenario, the following relationships hold:
$$
b_1(\sigma_{\Delta}) = b_1(\sigma_{\Lambda}) + 1, \quad b_i(\sigma_{\Delta}) = b_i(\sigma_{\Lambda}) \text{ for all } i = 2, \ldots, |\Lambda|, \quad \text{and} \quad b_{|\Delta|}(\sigma_{\Delta}) = 0.
$$
Under this condition, we obtain:
$$
\begin{aligned}
H_{\Delta}\left(\sigma_{\Delta}\right) & =\sum_{j=1}^{|\Delta|}\left[b_j\left(\sigma_{\Delta}\right) \ln \frac{\theta}{j}-\ln b_j\left(\sigma_{\Delta}\right)!\right]= \\
& =b_1\left(\sigma_{\Delta}\right) \ln \frac{\theta}{1}-\ln b_1\left(\sigma_{\Delta}\right)!+\sum_{j=2}^{|\Delta|-1}\left[b_j\left(\sigma_{\Delta}\right) \ln \frac{\theta}{j}-\ln b_j\left(\sigma_{\Delta}\right)!\right]= \\
& =(b_1(\sigma_{\Lambda})+1)\ln \theta -\ \ln (b_1(\sigma_{\Lambda})+1)! +\sum_{j=2}^{|\Lambda|}\left[b_j\left(\sigma_{\Delta}\right) \ln \frac{\theta}{j}-\ln b_j\left(\sigma_{\Delta}\right)!\right]= \\
& =\ln \theta -\ \ln (b_1(\sigma_{\Lambda})+1) +\sum_{j=1}^{|\Lambda|}\left[b_j\left(\sigma_{\Delta}\right) \ln \frac{\theta}{j}-\ln b_j\left(\sigma_{\Delta}\right)!\right]= \\
& =\ln \frac{\theta}{b_1\left(\sigma_{\Lambda}\right)+1}+H_{\Lambda}\left(\sigma_{\Lambda}\right).
\end{aligned}
$$
Now assume that $w(v) \in \{\sigma_{\Lambda}(x) \mid x \in \Lambda\}$. In this case, there exists $i_0 \in \{1, 2, \ldots, |\Lambda|\}$ such that the following relationships hold:
$$
b_{i_0}(\sigma_{\Delta}) = b_{i_0}(\sigma_{\Lambda}) - 1, \quad
b_{i_0+1}(\sigma_{\Delta}) = b_{i_0+1}(\sigma_{\Lambda}) + 1, \quad
b_i(\sigma_{\Delta}) = b_i(\sigma_{\Lambda}) \text{ for all } i \notin \{i_0, i_0+1\}.
$$
Under this assumption, we have:
$$
H_{\Delta}(\sigma_{\Delta}) = \ln \frac{i_0 b_{i_0}(\sigma_{\Lambda})}{\left(i_0+1\right)\left(b_{i_0+1}(\sigma_{\Lambda})+1\right)} + H_{\Lambda}(\sigma_{\Lambda}),
$$
and
$$
H_{\Delta, \Lambda}=H_{\Delta}(\sigma_{\Delta}) - H_{\Lambda}(\sigma_{\Lambda}) =
\begin{cases}
\ln \frac{\theta}{b_1(\sigma_{\Lambda}) + 1}, & \text{if } w(v) \notin \{\sigma_{\Lambda}(x) \mid x \in \Lambda\}, \\[4mm]
\ln \frac{i_0 b_{i_0}(\sigma_{\Lambda})}{\left(i_0+1\right)\left(b_{i_0+1}(\sigma_{\Lambda}) + 1\right)}, & \text{if } w(v) \in \{\sigma_{\Lambda}(x) \mid x \in \Lambda\}.
\end{cases}
$$
Since $0 \leq b_1(\sigma_{\Lambda}) \leq |\Lambda|$ and due to the properties of compositions of measurable functions, the function
$$
f(b_1(\sigma_{\Lambda})) = \ln \frac{\theta}{b_1(\sigma_{\Lambda}) + 1}
$$
is Borel measurable. For any $B \in \mathcal{B}(\mathbb{R})$, we write:
$$
f^{-1}(B) = \left\{b_1(\sigma_{\Lambda}) \in \{0, 1, \ldots, |\Lambda|\} : f(b_1(\sigma_{\Lambda})) \in B \right\} \subset \{0, 1, \ldots, |\Lambda|\}.
$$
For instance, $f^{-1}(\{1\}) = |\Lambda|$, which indicates that the spins in $\Lambda$ are all distinct. The measurability of the second function,
$$\ln \frac{i_0 b_{i_0}(\sigma_{\Lambda})}{\left(i_0+1\right)\left(b_{i_0+1}(\sigma_{\Lambda}) + 1\right)},$$
can be established by employing an argument analogous to the one used in the preceding case.
\end{proof}

Define a finite-dimensional distribution of  probability measure $P_{\Lambda}$ in the set $\Lambda \in \mathcal{N}$ as
\begin{equation}\label{eq44}
P_{\Lambda}(\sigma_{\Lambda}):=Z_{\Lambda}^{-1}\exp\left\{-\beta H_{\Lambda}(\sigma_{\Lambda})+\sum_{x \in \partial \Lambda} h_{\sigma_{\Lambda}(x),x}\right\}\end{equation}
where $\beta=\frac{1}{T}$, $T>0$ - temperature, $Z_{\Lambda}^{-1}$ is the normalizing factor, $\textbf{h}:=\{h_{t, x}\}_{x\in V}$ is a family of measurable functions.

\begin{defn}\label{3}
A family of measures $\phi = \{\phi_{\Lambda}\}_{\Lambda \in \mathcal{N}}$ is said to be \textbf{multiplicative} if, for every pair of sets $\Lambda, \Delta \in \mathcal{N}$ with $\Lambda \subset \Delta$, there exists a measure $\phi_{\Delta, \Lambda} \in \bar{M}\left(\mathcal{B}\left(\mathcal{S}_{|\Lambda|}\right)\right)$ such that $\phi_{\Delta} = \phi_{\Delta, \Lambda} \, \phi_{\Lambda}.$
\end{defn}

It is important to note that if a given family of kernels satisfies both the specification and multiplicativity conditions, this provides a framework in which theorems concerning the existence of Gibbs and non-Gibbs measures on regular trees can be effectively applied.
\begin{thm}\label{thm1}
 The family of measures $\{P_{\Lambda}(\sigma_{\Lambda})\}_{\Lambda\in \mathcal{N}}$ in \eqref{eq44} is multiplicative for each $\Lambda, \Delta \in \mathcal{N}$ with $\Lambda \subset \Delta$.
\end{thm}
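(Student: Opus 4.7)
The plan is to verify Definition \ref{3} directly by exhibiting the kernel $P_{\Delta,\Lambda}$ as the ratio $P_{\Delta}(\sigma_{\Delta})/P_{\Lambda}(\sigma_{\Lambda})$. Starting from the explicit form in (\ref{eq44}), I would first write
\[
\frac{P_{\Delta}(\sigma_{\Delta})}{P_{\Lambda}(\sigma_{\Lambda})}=\frac{Z_{\Lambda}}{Z_{\Delta}}\exp\!\left\{-\beta\bigl(H_{\Delta}(\sigma_{\Delta})-H_{\Lambda}(\sigma_{\Lambda})\bigr)+\sum_{x\in\partial\Delta}h_{\sigma_{\Delta}(x),x}-\sum_{x\in\partial\Lambda}h_{\sigma_{\Lambda}(x),x}\right\},
\]
and declare this expression to be the candidate $P_{\Delta,\Lambda}$; by construction it rearranges (\ref{eq44}) into the multiplicative form $P_{\Delta}=P_{\Delta,\Lambda}\cdot P_{\Lambda}$ required by Definition \ref{3}.

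Next I would invoke Proposition \ref{pr1}: since $\mathcal{H}=\{H_{\Lambda}\}_{\Lambda\in\mathcal{N}}$ is additive, there exists an $H_{\Delta,\Lambda}\in\bar{M}(\mathcal{B}(\mathcal{S}_{|\Lambda|}))$ such that $H_{\Delta}=H_{\Delta,\Lambda}+H_{\Lambda}$, with the explicit piecewise formula already recorded there in the base case $\Delta=\Lambda\cup\{v\}$. Substituting this decomposition, the exponent of the candidate kernel depends on the spins only through $H_{\Delta,\Lambda}$ and the two boundary sums. Measurability of $P_{\Delta,\Lambda}$ then follows from composing the measurable $H_{\Delta,\Lambda}$ with the continuous exponential, from the standing measurability of $\{h_{t,x}\}_{x\in V}$, and from closure of $\bar{M}(\mathcal{B}(\mathcal{S}_{|\Lambda|}))$ under finite sums and products. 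To pass from the single-vertex step to general $\Delta\supset\Lambda$, I would induct on $|\Delta\setminus\Lambda|$, obtaining $P_{\Delta,\Lambda}$ as a finite product of single-vertex kernels, each of which has already been shown to be measurable.

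The main obstacle is the boundary contribution: $\partial\Delta$ and $\partial\Lambda$ are distinct vertex sets on $\Im^{k}$, so the difference of the two boundary sums cannot be absorbed into $H_{\Delta,\Lambda}$. I would need to track precisely which vertices leave $\partial\Lambda$ when a vertex $v$ is attached and which new neighbors of $v$ enter $\partial\Delta$, and then verify that the resulting residual, now depending on $\sigma_{\Lambda}$ at finitely many fixed tree vertices together with $\sigma_{\Delta}(v)$, is still measurable with respect to $\mathcal{B}(\mathcal{S}_{|\Lambda|})$. This parallels the measurability argument at the end of the proof of Proposition \ref{pr1}, where preimages of a piecewise-defined function of the $b_j(\sigma_{\Lambda})$ were enumerated explicitly; the same preimage-by-preimage analysis, combined with the finitely many combinatorial possibilities for how $v$ attaches to $\Lambda$ in the tree, should close the argument and yield the claimed multiplicativity.
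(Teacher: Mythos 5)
Your proposal follows essentially the same route as the paper: form the ratio $P_{\Delta}(\sigma_{\Delta})/P_{\Lambda}(\sigma_{\Lambda})$, factor out $Z_{\Lambda}/Z_{\Delta}$, use the additivity of $\{H_{\Lambda}\}$ from Proposition \ref{pr1} to replace $H_{\Delta}-H_{\Lambda}$ by the measurable increment $H_{\Delta,\Lambda}$, and conclude measurability of the kernel by composition with $\exp$. Two points of comparison are worth recording. First, the ``main obstacle'' you flag --- the mismatch between $\partial\Delta$ and $\partial\Lambda$ --- is exactly the step the paper disposes of by fiat: it simply declares $\partial\Lambda=\{u,x_1,\ldots,x_m\}$ and $\partial\Delta=\{v,x_1,\ldots,x_m\}$ with $\langle u,v\rangle\in L$, so that the boundary sums telescope to $h_{\sigma_{\Delta}(v),v}-h_{\sigma_{\Lambda}(u),u}$. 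Your hesitation here is well founded: on a $(k+1)$-regular tree with $k\geq 2$, attaching a single vertex $v$ to $u$ does not in general remove $u$ from the boundary (it still has $k-1$ other neighbours outside $\Delta$), so the paper's bookkeeping holds only for special exhaustions (or $k=1$); neither you nor the paper actually resolves this in general, but you at least identify it as the point requiring an argument. Second, your induction on $|\Delta\setminus\Lambda|$ to pass from the single-vertex step to arbitrary $\Lambda\subset\Delta$ is an addition the paper omits entirely --- the published proof treats only $\Delta=\Lambda\cup\{v\}$ even though the theorem is stated for all pairs --- and composing single-vertex kernels is the natural way to fill that gap, since multiplicativity of the intermediate kernels chains correctly. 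In short: same decomposition and same key lemma; your version is more honest about the boundary term and more complete on the induction, though the boundary analysis still needs to be carried out rather than merely announced.
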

\begin{proof}
  Repeating as above, we consider the case $\Delta = \Lambda \cup \{v\}$ with $\Lambda \in \mathcal{N}$. According to the Definition \ref{3} and \eqref{eq44} we write the following:
  $$P_{\Delta, \Lambda}=\frac{P_{\Delta}(\sigma_{\Delta})}{P_{\Lambda}(\sigma_{\Lambda})}=\frac{Z_{\Lambda}}{Z_{\Delta}}\frac{e^{-\beta H_{\Delta}(\sigma_{\Delta})+\sum_{x \in \partial \Delta} h_{\sigma_{\Delta}(x),x}}}{e^{-\beta H_{\Lambda}(\sigma_{\Lambda})+\sum_{x \in \partial \Lambda} h_{\sigma_{\Lambda}(x),x}}}.$$
  Since $\partial \Lambda = \{u, x_1, \ldots, x_m\}$ and $\partial \Delta = \{v, x_1, \ldots, x_m\}$, where $\langle u, v \rangle \in L$, we rewrite
  $$
  P_{\Delta, \Lambda}=\frac{Z_{\Lambda}}{Z_{\Delta}} e^{-\beta H_{\Delta, \Lambda}} \cdot e^{h_{\sigma_{\Delta}(v),v}-h_{\sigma_{\Lambda}(u),u}}.
  $$
  By Proposition \ref{pr1}, $\exp\{-\beta H_{\Delta, \Lambda}\} \in \bar{M}\left(\mathcal{B}(\mathcal{S}_{|\Lambda|})\right)$ and $\frac{Z_{\Lambda}}{Z_{\Delta}}=\frac{|\Lambda|+1}{|\Lambda|+\theta}$ is Borel measurable, then we conclude that $ P_{\Delta, \Lambda} \in \bar{M}\left(\mathcal{B}\left(\mathcal{S}_{|\Lambda|}\right)\right).$
\end{proof}

 \begin{defn}
 The family of measures $\{\mu_{\Lambda}\}_{\Lambda\in \mathcal{N}}$ is said to be consistent (compatible) if $\mu_{\Lambda}(F_{\Lambda})= \mu_{\Delta}(F_{\Delta})$ for all $F_{\Lambda}=F_{\Delta} \in \mathcal{B}_{\Lambda}$ whenever $\Lambda \subset \Delta$.
\end{defn}

 \begin{thm} Let \(\{P_{\Lambda}(\sigma_{\Lambda})\}_{\Lambda \in \mathcal{N}}\) be the family of probability distributions defined by equation \eqref{eq44}. For each $\Lambda\in\mathcal{N}$ and $\Delta:=\Lambda\cup \{v\} (v \in V\setminus\Lambda)$, if the family of measurable functions \(\textbf{h}\) satisfies the following equation:
\begin{equation}\label{eq10}
\begin{aligned}
& \frac{Z_{\Delta}}{Z_{\Lambda}}=\sum_{w(v) \notin\left\{\sigma_{\Lambda}(x) \mid x \in \Lambda\right\}} \exp \left\{-\beta \ln \frac{\theta}{b_1\left(\sigma_{\Lambda}\right)+1}+h_{w(v), v}-h_{\sigma_{\Lambda}(u), u}\right\}+ \\
& \sum_{w(v) \in\left\{\sigma_{\Lambda}(x) \mid x \in \Lambda\right\}} \exp \left\{-\beta \ln \frac{\alpha_{\Lambda}(w) b_{\alpha_{\Lambda}(w)}\left(\sigma_{\Lambda}\right)}{\left(\alpha_{\Lambda}(w)+1\right)\left(b_{\alpha_{\Lambda}(w)+1}\left(\sigma_{\Lambda}\right)+1\right)}+h_{w(v), v}-h_{\sigma_{\Lambda}(u), u}\right\},
\end{aligned}
\end{equation}
then the family \(\{P_{\Lambda}(\sigma_{\Lambda})\}_{\Lambda \in \mathcal{N}}\) is consistent.
 In this context, $\alpha_{\Lambda}(w)$ is the number of distinct elements in $\Lambda$ that appear exactly as many times as $w(v)$ does.
\end{thm}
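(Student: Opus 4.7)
The plan is to reduce the problem to a one-step statement, i.e., to verify the consistency identity only for pairs $\Lambda\subset\Delta$ with $\Delta=\Lambda\cup\{v\}$, and then to extend to arbitrary inclusions $\Lambda\subset\Delta$ by adding vertices one at a time (finite induction on $|\Delta\setminus\Lambda|$). Because $F_\Lambda$ is $\mathcal{B}_\Lambda$-measurable, the consistency condition $P_\Lambda(F_\Lambda)=P_\Delta(F_\Delta)$ for $\Delta=\Lambda\cup\{v\}$ is equivalent, on the level of probability mass functions, to the marginalization identity
\[
P_\Lambda(\sigma_\Lambda)=\sum_{w(v)\in\mathbb{Z}} P_\Delta\bigl(\sigma_\Lambda\vee w(v)\bigr),
\]
which is what I will establish.

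First, I invoke Theorem \ref{thm1}: the family $\{P_\Lambda\}$ is multiplicative, so there is a kernel $P_{\Delta,\Lambda}$ with $P_\Delta(\sigma_\Delta)=P_{\Delta,\Lambda}\cdot P_\Lambda(\sigma_\Lambda)$, and the proof of that theorem gives us the explicit form
\[
P_{\Delta,\Lambda}=\frac{Z_\Lambda}{Z_\Delta}\, e^{-\beta H_{\Delta,\Lambda}}\, e^{\,h_{w(v),v}-h_{\sigma_\Lambda(u),u}}.
\]
Since $P_\Lambda(\sigma_\Lambda)>0$, dividing the marginalization identity through by $P_\Lambda(\sigma_\Lambda)$ shows that consistency is equivalent to
\[
\sum_{w(v)\in\mathbb{Z}} P_{\Delta,\Lambda}=1,\quad\text{i.e.,}\quad \frac{Z_\Delta}{Z_\Lambda}=\sum_{w(v)\in\mathbb{Z}} e^{-\beta H_{\Delta,\Lambda}}\, e^{\,h_{w(v),v}-h_{\sigma_\Lambda(u),u}}.
\]

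Next, I split the sum over $w(v)\in\mathbb{Z}$ into the two exhaustive and disjoint cases treated in the proof of Proposition \ref{pr1}. If $w(v)\notin\{\sigma_\Lambda(x)\mid x\in\Lambda\}$, that proposition yields $H_{\Delta,\Lambda}=\ln\frac{\theta}{b_1(\sigma_\Lambda)+1}$. If instead $w(v)\in\{\sigma_\Lambda(x)\mid x\in\Lambda\}$, then, writing $i_0=\alpha_\Lambda(w)$ for the multiplicity in $\sigma_\Lambda$ of the value $w(v)$, the same proposition gives $H_{\Delta,\Lambda}=\ln\frac{i_0\, b_{i_0}(\sigma_\Lambda)}{(i_0+1)(b_{i_0+1}(\sigma_\Lambda)+1)}$. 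Substituting these two expressions into the displayed identity reproduces equation \eqref{eq10} verbatim. Thus \eqref{eq10} is equivalent to one-step consistency, which is all that is needed.

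The main obstacle is not algebraic but analytic: the spin space $\mathbb{Z}$ is countably infinite, and the first sum in \eqref{eq10} runs over infinitely many values of $w(v)$, so one must argue that the series is absolutely convergent in order to justify the partition into the two cases and the rearrangement. This places an implicit integrability requirement on the boundary field $\mathbf{h}=\{h_{t,x}\}$, namely that $\sum_{t\in\mathbb{Z}} e^{\,h_{t,v}}<\infty$ for each $v$; under this hypothesis the computation above goes through and the induction step on $|\Delta\setminus\Lambda|$ is routine, completing the proof.
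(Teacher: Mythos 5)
Your proof is correct and follows essentially the same route as the paper: reduce consistency to the one-step marginalization identity for $\Delta=\Lambda\cup\{v\}$, divide by $P_{\Lambda}(\sigma_{\Lambda})$, split the sum over $w(v)\in\mathbb{Z}$ according to whether $w(v)$ already occurs in $\sigma_{\Lambda}$, and substitute the two Hamiltonian increments $H_{\Delta,\Lambda}$ from Proposition~\ref{pr1} to arrive at \eqref{eq10}. Your remark that the first sum runs over infinitely many integers and hence needs $\sum_{t}e^{h_{t,v}}<\infty$ is a point the paper leaves implicit (it is in effect absorbed into the hypothesis that \eqref{eq10} holds with a finite left-hand side), and your induction on $|\Delta\setminus\Lambda|$ makes explicit a routine step the paper omits.
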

\begin{proof} We consider the condition of consistency for the case $\Delta = \Lambda \cup \{v\}$ with $\Lambda \in \mathcal{N}$ and $\Phi = \mathbb{Z}$.
Firstly, assume that $w(v) \neq \sigma_{\Lambda}(x)$ for all $x \in \Lambda$. Under this assumption, the following relations hold:
$$b_{1}(\sigma_{\Delta}) = b_{1}(\sigma_{\Lambda}) + 1, \quad
b_{i}(\sigma_{\Delta}) = b_{i}(\sigma_{\Lambda}) \quad \text{for all } i = 2, ..., |\Lambda|, \quad \text{and} \quad
b_{|\Delta|}(\sigma_{\Delta}) = 0.$$
In this case, we have:
$$
\begin{aligned}
H_{\Delta}\left(\sigma_{\Delta}\right) & =\sum_{j=1}^{|\Delta|}\left[b_j\left(\sigma_{\Delta}\right) \ln \frac{\theta}{j}-\ln b_j\left(\sigma_{\Delta}\right)!\right]= \\
& =b_1\left(\sigma_{\Delta}\right) \ln \frac{\theta}{1}-\ln b_1\left(\sigma_{\Delta}\right)!+\sum_{j=2}^{|\Delta|-1}\left[b_j\left(\sigma_{\Delta}\right) \ln \frac{\theta}{j}-\ln b_j\left(\sigma_{\Delta}\right)!\right]= \\
& =(b_1(\sigma_{\Lambda})+1)\ln \theta -\ \ln (b_1(\sigma_{\Lambda})+1)! +\sum_{j=2}^{|\Lambda|}\left[b_j\left(\sigma_{\Delta}\right) \ln \frac{\theta}{j}-\ln b_j\left(\sigma_{\Delta}\right)!\right]=\\
& =(b_1(\sigma_{\Lambda})+1)\ln \theta -\ \ln (b_1(\sigma_{\Lambda})+1)! +\sum_{j=2}^{|\Lambda|}\left[b_j\left(\sigma_{\Lambda}\right) \ln \frac{\theta}{j}-\ln b_j\left(\sigma_{\Lambda}\right)!\right]=
\end{aligned}$$
From  $\ln (b_1(\sigma_{\Lambda})+1)!=\ln(b_1(\sigma_{\Lambda}))!+\ln(b_1(\sigma_{\Lambda})+1)$ one gets
\begin{equation}\label{eq7} H_{\Delta}\left(\sigma_{\Delta}\right)=\ln \frac{\theta}{b_1\left(\sigma_{\Lambda}\right)+1}+H_{\Lambda}\left(\sigma_{\Lambda}\right).
\end{equation}

Now we consider the second case, i.e., there exists $x \in \Lambda$ such that $w(v)=\sigma_{\Lambda}(x)$. In this case, there exists $i_0 \in \{1, 2, \ldots, |\Lambda|\}$ such that the following relations hold:
\[
b_{i_0}(\sigma_{\Delta}) = b_{i_0}(\sigma_{\Lambda}) - 1, \quad
b_{i_0+1}(\sigma_{\Delta}) = b_{i_0+1}(\sigma_{\Lambda}) + 1, \quad \text{and} \quad
b_{i}(\sigma_{\Delta}) = b_{i}(\sigma_{\Lambda}) \quad \text{for all }
\] $i \notin \{i_0, i_0+1\}.$ Then, we have
\begin{equation}\label{eq8}
H_{\Delta}\left(\sigma_{\Delta}\right)=\sum_{j=1}^{|\Delta|}\left[b_j\left(\sigma_{\Delta}\right) \ln \frac{\theta}{j}-\ln b_j\left(\sigma_{\Delta}\right)!\right]=\ln \frac{i_0 b_{i_0}(\sigma_{\Lambda})}{(i_0+1)\left(b_{i_0+1}(\sigma_{\Lambda})+1\right)}+H_{\Lambda}\left(\sigma_{\Lambda}\right).
\end{equation}
Thus, the consistency condition can be expressed as:
\[
\sum_{w(v) \in \Phi^{\{v\}}} P_{\Delta}\left(\sigma_{\Delta} \vee w(v)\right) = P_{\Lambda}\left(\sigma_{\Lambda}\right),
\]
for all $\sigma_{\Lambda} \in \Omega^{\Lambda}$. Namely, we can rewrite as follows:
$$
\frac{Z_{\Delta}}{Z_{\Lambda}}=\sum_{{w(v)} \in \Phi^{\{v\}}} \exp\{-\beta H_{\Delta}(\sigma_{\Delta})+\beta H_{\Lambda}(\sigma_{\Lambda})\}\times
$$
$$
\times \exp\left\{ \sum_{x \in \partial \Delta} h_{\sigma_{\Delta}(x),x}-\sum_{x \in \partial \Lambda}h_{\sigma_{\Lambda}(x),x}\right\}.
$$
Given that $\partial \Lambda = \{u, x_1, \ldots, x_m\}$ and $\partial \Delta = \{v, x_1, \ldots, x_m\}$, where $\langle u, v \rangle \in L$, the following relationship holds:
\begin{equation}\label{*}
\frac{Z_{\Delta}}{Z_{\Lambda}}=\sum_{{w(v)} \in \Phi^{\{v\}}} \exp\{-\beta H_{\Delta}(\sigma_{\Delta})+\beta H_{\Lambda}(\sigma_{\Lambda})+h_{{w(v)},v}-h_{{\sigma_{\Lambda}(u)},u}\}.
\end{equation}
Using \eqref{eq7} and \eqref{eq8}, we can write
$$\begin{aligned}
& \frac{Z_{\Delta}}{Z_{\Lambda}}=\sum_{w(v) \notin\left\{\sigma_{\Lambda}(x) \mid x \in \Lambda\right\}} \exp \left\{-\beta \ln \frac{\theta}{b_1\left(\sigma_{\Lambda}\right)+1}+h_{w(v), v}-h_{\sigma_{\Lambda}(u), u}\right\}+ \\
+ & \sum_{w(v) \in\left\{\sigma_{\Lambda}(x) \mid x \in \Lambda\right\}} \exp \left\{-\beta \ln \frac{i_0 b_{i_0}\left(\sigma_{\Lambda}\right)}{\left(i_0+1\right)\left(b_{i_0+1}\left(\sigma_{\Lambda}\right)+1\right)}+h_{w(v), v}-h_{\sigma_{\Lambda}(u), u}\right\} .
\end{aligned}$$
From $i_0=\alpha_{\Lambda}(w)$ one gets (\ref{eq10}).
\end{proof}

\section*{Statements and Declarations}
{\bf	Conflict of interest statement:}
The author states that there is no conflict of interest.
\section*{Data availability statements}
The datasets generated during and/or analysed during the current study are available from the corresponding author on reasonable request.

\end{document}